\title{\vskip-2.0em A note on some group $\Cst$-algebras which are quasi-directly finite}
\author{Yemon Choi}
\newcommand{\dt}[1]{\textcolor{Maroon}{\textsf{#1}}}
\newcommand{\defeq}{:=}
\renewcommand{\emph}[1]{{\sl #1}\/} % use slanted font for emphasis
\newenvironment{YCnum}{%
\begin{enumerate}

}{\end{enumerate}\ignorespacesafterend}
\newcommand{\Real}{{\mathbb R}}
\newcommand{\Cplx}{\mathbb C}
\newcommand{\Bdd}{{\mathcal B}}
\newcommand{\Cpct}{{\mathcal K}}
\newcommand{\fu}[1]{{#1}^{\sharp}}    % forced unitisation
\newcommand{\cu}[1]{{#1}_{\rm un}}  % conditional unitisation
\newcommand{\id}[1][]{{\sf 1}_{#1}} % identity of group, algebra, object in category
\newcommand{\abs}[1]{\vert{#1}\vert}
\newcommand{\norm}[1]{\Vert{#1}\Vert}
\newcommand{\VN}{\operatorname{VN}}
\newcommand{\FA}{\operatorname{A}}
\newcommand{\Cst}{\ensuremath{\operatorname{C}^*}}
\newcommand{\SIN}{\ensuremath{\operatorname{SIN}}}
\newcommand{\SL}{\operatorname{SL}}
\newcommand{\ran}{\operatorname{ran}}
\newcommand{\qm}{\mathrel{\bullet}} % quasi-multiplication
\newcommand{\cA}{{\mathcal A}}
\newcommand{\cF}{{\mathcal F}}
\newcommand{\cM}{{\mathcal M}}
\newcommand{\cT}{{\mathcal T}}
\newcommand{\fg}{{\mathfrak g}}
\newcommand{\fm}{{\mathfrak m}}
\newcommand{\fn}{{\mathfrak n}}
\newcommand{\sR}{{\sf R}}
\newcommand{\al}{\alpha}
\newcommand{\lm}{\lambda}
\newcommand{\Gm}{\Gamma}
\newcommand{\pair}[2]{\langle#1,#2\rangle}
\newcounter{pulse}[section]
\numberwithin{pulse}{section}  % all thms, etc set from pulse
\theoremstyle{plain}
\newtheorem{thm}[pulse]{\sc Theorem}
\newtheorem{prop}[pulse]{\sc Proposition}
\newtheorem{lem}[pulse]{\sc Lemma}
\newtheorem{cor}[pulse]{\sc Corollary}
\theoremstyle{definition}
\newtheorem{dfn}[pulse]{\sc Definition}
\theoremstyle{remark}
\newtheorem{rem}[pulse]{\sc Remark}
\begin{document}
\maketitle

\begin{abstract}
%{\Large\tt Preliminary skeleton of draft version.}
An algebra is said to be quasi-directly finite when any left-invertible element in its unitization is automatically right-invertible. It is an old observation of Kaplansky that the von Neumann algebra of a discrete group has this property; in this note, we collate some analogous results for the group $\Cst$-algebras of more general locally compact groups. Partial motivation comes from earlier work of the author on the phenomenon of empty residual spectrum for convolution operators.

\bigskip\noindent {MSC 2010: Primary 22D25; Secondary 46L05}
\end{abstract}

\begin{section}{Introduction}
Following Munn~\cite{Munn_DF1}, we say that an algebra $R$ is \dt{quasi-directly finite} if its unitization $\fu{R}$ is \dt{directly finite}; that is, if every left invertible element in $\fu{R}$ is automatically right invertible. In the present article, we consider the question of when the reduced or full $\Cst$-algebras of a locally compact group are quasi-directly finite.
Apart from intrinsic interest, this question is motivated by arguments in the author's previous article \cite{YC_surjunc}, where attention was mostly confined to the case of discrete groups. In this case, the group von Neumann algebras are known to be directly finite; one can exploit this to show that, in a variety of settings, convolution operators associated to actions of discrete groups have empty residual spectrum.

As a hint that the situation will be subtler than in the discrete case, consider the reduced group $\Cst$-algebra and the group von Neumann algebra of $\SL(2,\Real)$, denoted by $\Cst_r(\SL(2,\Real))$ and $\VN(\SL(2,\Real))$ respectively. Both algebras have been studied in some detail, and it follows from standard facts about them, that while $\VN(\SL(2,\Real))$ is \emph{not} directly finite, the unitization of $\Cst_r(\SL(2,\Real))$~\emph{is}.
More generally, we shall see below (Theorem~\ref{t:unimodular-qdf}) that $\Cst_r(G)$ is quasi-directly finite if $G$ is \emph{unimodular}.

There seems to be no known characterization, in terms of $G$, of when $\Cst_r(G)$ is quasi-directly finite. One of our secondary aims is to collate some of the relevant partial results in one place for ease of reference, as a precursor to possible further work.

\subsection*{Residual spectrum and directly finite algebras}
In order to give some background and motivation to what follows, we shall outline the link between the residual spectrum of an operator and direct finiteness of an algebra which contains it. Recall that if $T$ is a bounded linear operator on a Banach space $X$, then the \dt{residual spectrum} of $X$ is the set of all $\lm\in\Cplx$ such that $T-\lambda I$ is injective with closed, proper range; put slightly differently, it is the set of all $\lm$ in the spectrum of $T$ which are not approximate or genuine eigenvalues. Thus, knowing in advance that the residual spectrum of $T$ is empty is of interest in working out the spectral theory of~$T$.

Let us say that a subalgebra $A\subseteq\Bdd(X)$ is \dt{surjunctive} if every $a\in A$ has empty residual spectrum (note that this is, {\it a priori}\/, not just a property of the algebra $A$ as an abstract algebra, but a property of $A$ and its realization inside $\Bdd(X)$). If~$A$ is surjunctive, then a short argument shows that the subalgebra of $\Bdd(X)$ generated by $A$ and the identity operator must be directly finite; the converse is in general false, as illustrated by a construction of G.~A. Willis (see \cite[Proposition~4.1]{YC_surjunc} and the surrounding remarks for some discussion of this).

On the other hand, under extra conditions, we do get a partial converse:

\begin{thm}\label{t:surjunc}
Let $A\subseteq\Bdd(X)$ be a \emph{closed} subalgebra. Suppose that $A$ is bicontinuously isomorphic to a quasi-directly finite $\Cst$-algebra. Then $A$ is surjunctive.
\end{thm}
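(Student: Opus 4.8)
The plan is to move the whole question into a single unital closed subalgebra of $\Bdd(X)$ and there play the operator-theoretic hypothesis off against direct finiteness. Fix $a\in A$ and $\lambda\in\Cplx$ and suppose $T\defeq a-\lambda I$ is injective with closed range; the goal is to deduce that $T$ is then surjective, since this is exactly the assertion that $a$ has empty residual spectrum, and $a$ is arbitrary. Set $\tilde A\defeq A+\Cplx I\subseteq\Bdd(X)$. Because $A$ is closed and we are only adjoining a one-dimensional subspace, $\tilde A$ is closed in $\Bdd(X)$, hence a unital Banach algebra in the operator norm, and $T\in\tilde A$.

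The first task is to carry the hypotheses on $A$ over to $\tilde A$: I claim $\tilde A$ is still bicontinuously isomorphic to a $\Cst$-algebra and, moreover, is directly finite. For the latter, note that the unital homomorphism $q\colon\fu A\to\tilde A$ extending the inclusion $A\hookrightarrow\tilde A$ is surjective, with kernel $\{0\}$ when $I\notin A$ and with kernel spanned by a central idempotent when $I\in A$; in either case $\tilde A$ is isomorphic to a ring-theoretic direct summand of $\fu A$ (possibly all of it). If $\theta\colon A\to B$ is a bicontinuous isomorphism onto a quasi-directly finite $\Cst$-algebra $B$, then extending $\theta$ componentwise gives a bicontinuous isomorphism $\fu A\cong\fu B$, and $\fu B$ is a directly finite $\Cst$-algebra by hypothesis; since direct finiteness passes to ring-direct summands, $\tilde A$ is directly finite. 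The same chain of isomorphisms, combined with the fact that a quotient of a $\Cst$-algebra by a closed (here finite-dimensional) ideal is again a $\Cst$-algebra, shows that $\tilde A$ is bicontinuously isomorphic to a unital $\Cst$-algebra, which I will call $C$.

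Now comes the point where being a $\Cst$-algebra, not merely being directly finite, is used. Since $T$ is injective with closed range, the open mapping theorem furnishes $\kappa>0$ with $\norm{T\xi}\geq\kappa\norm{\xi}$ for every $\xi\in X$; hence $\norm{TS}\geq\kappa\norm{S}$ for every $S\in\Bdd(X)$, so in particular left multiplication by $T$ is bounded below on $\tilde A$. Transporting this through the bicontinuous isomorphism $\tilde A\cong C$, the image of $T$ in $C$ has left multiplication bounded below; but in a unital $\Cst$-algebra an element $c$ for which $x\mapsto cx$ is bounded below satisfies $c^{*}c\geq\varepsilon 1$ for some $\varepsilon>0$, whence $c^{*}c$ is invertible and $(c^{*}c)^{-1}c^{*}$ is a left inverse of $c$. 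Pulling back, $T$ is left-invertible in $\tilde A$. (This is precisely the implication that fails for general directly finite Banach algebras, which is why Willis's example does not contradict the present statement.) Finally, direct finiteness of $\tilde A$ upgrades the left inverse of $T$ to a two-sided inverse, so $T$ is invertible in $\tilde A$, hence invertible in $\Bdd(X)$; thus $\ran T=X$, as required.

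The step I expect to be the real obstacle is the bookkeeping of the second paragraph: one must keep careful track of whether $I$ already lies in $A$, and verify that both "directly finite" and "bicontinuously isomorphic to a $\Cst$-algebra" genuinely survive the passage from $B$ through $\fu B$ (and the isomorphism $\fu A\cong\fu B$) down to $\tilde A$. The one piece of genuine structure needed there is the standard identification $\fu B\cong B\oplus\Cplx$ for unital $B$, which is what allows direct finiteness to be transferred between $\fu B$ and $B$ itself. Everything else is either routine functional analysis, namely the norm estimate via the open mapping theorem, or the elementary $\Cst$-algebra observation used in the third paragraph.
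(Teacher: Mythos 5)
Your proof is correct and is essentially the paper's argument made self-contained: you pass to the closed unital algebra $A+\Cplx I$, transfer direct finiteness and the $\Cst$-structure through the unitization exactly as Lemma~\ref{l:unitizations} does, and then combine the $\Cst$-algebraic fact that an element whose left multiplication is bounded below is left invertible with direct finiteness to upgrade to a two-sided inverse. The paper runs the same mechanism contrapositively (producing an approximate-null sequence $(b_n)$, hence approximate eigenvectors) and defers that core step to the proof of Theorem~3.2 of \cite{YC_surjunc}, so the difference is one of presentation rather than of method.
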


\begin{proof}[Proof sketch]
This is essentially proved in \cite{YC_surjunc}, but the arguments are somewhat scattered and are moreover restricted to the case where $A$ is unital. Thus, for sake of convenience, we sketch how the argument goes.

Let $a\in A$, $\lm\in\Cplx$, and suppose $a-\lm I$ is injective but not invertible. It suffices to find a sequence $(b_n)$ in $A$ such that $\norm{(a-\lm I)b_n}\to 0$ while $\inf_n \norm{b_n} > 0$; for, given such a sequence, it follows that there is a sequence $(x_n)$ in $X$ of approximate eigenvectors for $a$ corresponding to $\lm$, showing that $\lm$ is not in the residual spectrum.

It will follow from Lemma~\ref{l:unitizations} below that since $A$ is quasi-directly finite, the sub\-algebra $\cu{A}\subseteq\Bdd(X)$ that is generated by $A$ and $I$ will be directly finite; moreover, $\cu{A}$ is bicontinuously isomorphic to a \emph{unital}, directly finite $\Cst$-algebra. Note also that left multiplication by $a-\lm I$ must be an injective operator from $A$ to itself. The existence of a sequence $(b_n)$ with the required properties now follows exactly as in the proof of \cite[Theorem~3.2]{YC_surjunc}.
\end{proof}

One way to obtain examples satisfying the conditions of Theorem~\ref{t:surjunc} is as follows. Let $\cM$ be a semifinite von Neumann algebra, equipped with a faithful, normal, semifinite trace $\tau$ on $\cM$, let $1\leq p\leq\infty$, and let $L^p(\cM,\tau)$ be the noncommutative $L^p$-space associated to $(\cM,\tau)$, as constructed in e.g.~\cite{Dix_BSMF53}; then $L^p(\cM,\tau)$ has the structure of an $\cM$-bimodule in a natural way, and the corresponding homomorphism $\imath_\tau: \cM \to \Bdd(L^p(\cM,\tau))$ is injective with closed range. Thus, if $\cA$ is a quasi-directly finite $\Cst$-subalgebra of $\cM$, Theorem~\ref{t:surjunc} implies that $\imath_\tau(\cA)$ will be a surjunctive subalgebra of $\Bdd(L^p(\cM,\tau))$.

By a result of Dixmier~\cite{Dix_semifin}, $\VN(G)$ is semifinite for every connected locally compact group $G$, and so the discussion of the previous example can be applied.
It is worth noting that for \emph{any} locally compact group~$G$, there is a natural action of $\VN(G)$ on the \dt{Fourier algebra} $\FA(G)$ (see \cite{Eym_BSMF64} for the definition). It is not hard to show that the corresponding homomorphism $\imath_{\FA}: \VN(G) \to \Bdd(\FA(G))$ is an isometry, giving us another possible source of examples.

\begin{rem}
Given a von Neumann algebra $\cM$, its predual $\cM_*$ has a natural $\cM$-bimodule structure. Moreover:
\begin{YCnum}
\item if $\cM$ has a faithful normal semifinite trace $\tau$, then it can be shown that there is an isomorphism of $\cM$-bimodules from $L^1(\cM,\tau)$ onto $\cM_*$;
\item if $\cM=\VN(G)$ for some locally compact group $G$, then there is an isomorphism of $\cM$-bimodules from $\FA(G)$ onto $\VN(G)_*$.
\end{YCnum}
Thus if $\VN(G)$ is semifinite, with a trace $\tau$, this gives us two ways to view the action of $\VN(G)$ on its predual.
\end{rem}

\end{section}

\begin{section}{Notation and preliminaries}
It is convenient, in phrasing some of our results, to use the notions of left and right quasi-inverses.

\begin{dfn}
Given $a,b\in A$, let $a\qm b\defeq a+b-ab$. If $a\qm b=0$ then we say that $a$ is a \dt{left quasi-inverse} for $b$ and $b$ is a \dt{right quasi-inverse} for $a$\/.
\end{dfn}

The idea, of course, is that if $A$ has an identity element $\id$, then $\id-a\qm b = (\id-a)(\id-b)$. It is more intuitive to reason with left, right and two-sided invertible elements than with their ``quasi-'' counterparts; but since we will be working with rings that may or may not have identity elements, the language of quasi-inverses streamlines some of the statements. This is particularly true when we start to move between various ideals in non-unital rings, where adjoining an identity would destroy the ideal property and make the phrasing of various conditions slightly cumbersome.

\subsection*{Basic properties}
These are surely well-known, but we collect them here for ease of reference. Note first of all that $\qm$ is associative: although one could check this by a direct comparison of $(a\qm b)\qm c$ with $a\qm(b\qm c)$, it is more instructive to adjoin a formal identity $\id$ and observe that
\[ \id-(a\qm b)\qm c = (\id-a\qm b)(\id-c) = (\id-a)(\id-b)(\id-c) = (\id-a)(\id-b\qm c) = \id-a\qm (b\qm c) \] 

An easy yet fundamental fact about unital Banach algebras is that the group of invertible elements is open in the norm topology. This has an obvious analogue for quasi-inverses; we state a slightly more precise version below, for later reference.

\begin{lem}\label{l:stable}
Let $A$ be a Banach algebra and let $a\in A$. Suppose $a$ has a right quasi-inverse in $A$; then so does every $a'$ that is sufficiently close to $a$.
\end{lem}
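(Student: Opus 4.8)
The plan is to adjoin a formal identity $\id$ and reduce to the standard fact that the right-invertible elements of a unital Banach algebra form an open set. The observation that makes this work is $\id-x\qm y=(\id-x)(\id-y)$ for $x,y\in\fu{A}$: the hypothesis that $b\in A$ is a right quasi-inverse for $a$ says exactly that $\id-b$ is a right inverse of $\id-a$ in $\fu{A}$, and what we must produce for $a'$ near $a$ is some $b'\in A$ with $(\id-a')(\id-b')=\id$. Note that the constraint ``$b'\in A$'', rather than just $b'\in\fu{A}$, is not a real obstruction, because $A$ is a closed two-sided ideal of $\fu{A}$; concretely, writing $z=\mu\id+w$ with $\mu\in\Cplx$ and $w\in A$, any solution of $(\id-a)z=\id$ has every term of $(\id-a)z$ except $\mu\id$ lying in $A$, which forces $\mu=1$, so $z\in\id+A$ automatically.

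Next I would carry out the perturbation by hand, so as to keep track of where the new quasi-inverse lives. Put $u\defeq(a-a')(\id-b)$. Since $a-a'\in A$ and $A$ is an ideal of $\fu{A}$, we have $u\in A$, and $\norm{u}\le\norm{a-a'}\,\norm{\id-b}$, so $\norm{u}<1$ once $a'$ is close enough to $a$. For such $a'$,
\[ (\id-a')(\id-b)=(\id-a)(\id-b)+(a-a')(\id-b)=\id+u , \]
which is invertible in $\fu{A}$; moreover $(\id+u)^{-1}-\id=-(\id+u)^{-1}u$ lies in $A$, again by the ideal property, so we may write $(\id+u)^{-1}=\id-v$ with $v\in A$. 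Then $\id-a'$ has the right inverse $(\id-b)(\id-v)=\id-(b\qm v)$, and setting $b'\defeq b\qm v\in A$ gives $(\id-a')(\id-b')=\id$, that is, $a'\qm b'=0$. Hence $b'$ is a right quasi-inverse for $a'$ in $A$, and the computation even produces the explicit stability radius $\norm{a-a'}<\norm{\id-b}^{-1}$.

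I do not expect any genuinely hard step; the point calling for a little care is precisely the bookkeeping around the two appeals to the ideal property of $A$ in $\fu{A}$ — first that $u\in A$, then that $(\id+u)^{-1}$ differs from $\id$ by an element of $A$ — which is what lets the argument conclude inside $A$ rather than only inside $\fu{A}$. For this reason I would favour the explicit Neumann-type computation above over a bare citation of openness of the right-invertible elements; if one were content with $a'$ merely having a right quasi-inverse in $\fu{A}$, those remarks could be omitted altogether.
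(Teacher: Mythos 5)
Your argument is correct and is essentially the paper's proof: adjoin a norm-one identity, perturb $(\id-a)(\id-b)=\id$ to $(\id-a')(\id-b)=\id+u$ with $\norm{u}<1$, invert by a Neumann series whose correction term lies in $A$, and conclude that $b\qm v$ is a right quasi-inverse of $a'$ (the paper phrases the last step via associativity of $\qm$, which is the same computation). Your explicit radius $\norm{a-a'}<\norm{\id-b}^{-1}$ matches the paper's choice $\delta=(1+\norm{b})^{-1}$ up to the triangle inequality, so there is nothing to add.
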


\begin{proof}
% This boils down to the analogous statement for right invertible elements in unital Banach algebras.
Suppose there exists $b\in A$ such that $a\qm b=0$. Adjoining an identity element $\id$ to $A$, chosen to have norm $1$, we thus have $(\id-a)(\id-b)=\id$.  
Put $\delta=(1+\norm{b})^{-1}>0$. Then, given any $a'\in A$ such that ${a'-a} < \delta$\/, put
\[ u \defeq (\id-a')(\id-b) \in \fu{A} \]

We have $\id-u = (a'-a)(\id-b)$ which has norm $< 1$; thus $u$ is invertible in the Banach algebra $\fu{A}$. Moreover, the usual formula for the inverse shows that
\[ c\defeq \id-u^{-1} = - \sum_{n\geq 1} (\id-u)^n \]
lies in $A$. Since $\id-u=a'\qm b$ by construction, we see that $(a'\qm b)\qm c=0$. By associativity of $\qm$, it follows that $b\qm c$ is a right quasi-inverse to $a'$.
\end{proof}

We will also use the following simple observation, whose proof is straightforward from the definitions.
\begin{lem}\label{l:ideal}
Let $A$ be a $k$-algebra and $J$ a right ideal in $A$. If $a\in J$ has a right quasi-inverse $b\in A$, then $b\in J$.
\end{lem}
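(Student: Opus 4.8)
The plan is to solve the quasi-inverse equation for $b$. By definition, $b$ being a right quasi-inverse for $a$ means $a \qm b = a + b - ab = 0$, which rearranges to the single identity
\[ b = ab - a . \]
So the whole proof reduces to observing that the right-hand side lies in $J$.

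For the right-hand side: $a \in J$ by hypothesis, and since $J$ is a \emph{right} ideal of $A$ and $b \in A$, we have $ab \in J$ as well. Hence $b = ab - a \in J$, as required. (Equivalently, one may adjoin a formal identity $\id$ and write $b = -a(\id - b)$, which exhibits $b$ as $a$ times an element of $\fu{A}$; the same right-ideal absorption then gives $b \in J$.)

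The only point that needs care — and it is purely a matter of bookkeeping rather than a genuine obstacle — is matching the handedness: a \emph{right} quasi-inverse $b$ for $a$ places $a$ on the \emph{left} in the product $ab$, so it is precisely absorption on the right, $J \cdot A \subseteq J$, that is invoked. (The mirror-image statement, with ``left'' replacing ``right'' throughout, follows by the symmetric computation $a = ba - b$.) I expect no difficulty beyond this.
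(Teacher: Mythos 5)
Your proof is correct and is exactly the one-line argument the paper has in mind when it omits the proof as ``straightforward from the definitions'': from $a\qm b=a+b-ab=0$ one gets $b=ab-a$, and both terms lie in $J$ by hypothesis and right-ideal absorption. (Only your parenthetical mirror-image aside is slightly off: for the left-handed version one should solve for $b$, obtaining $b=ba-a$ with $ba\in J$ since $J$ is then a left ideal, rather than writing $a=ba-b$; this does not affect the main argument.)
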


\subsection*{Quasi-directly finite algebras}
A ring $\sR$ with identity $\id$ is said to be \dt{directly finite} if any $x,y\in\sR$ which satisfy $xy=\id$ necessarily satisfy $yx=\id$. Many of the examples considered in the present article are algebras without an identity element, and while we can always pass to the unitization, it is more convenient to be able to work within the original algebra: see Proposition~\ref{p:qdf-ideal} below for an example of this. Thus, following Munn~\cite{Munn_DF1}, we make the following definition.

\begin{dfn}
Let $\sR$ be a ring, not necessarily having an identity element. We say $\sR$ is \dt{quasi-directly finite} if every element which is left quasi-invertible is also right quasi-invertible. (By our earlier remarks, if an element has both a left quasi-inverse $b_L$ and a right quasi-inverse $b_R$, then $b_L=b_R$\/.)
An algebra is said to be quasi-directly finite if its underlying ring~is.
\end{dfn}

For sake of brevity, we shall henceforth abbreviate ``quasi-directly finite'' to ``q.d.f''.

\begin{rem}
We gave our definition in terms of quasi-inverses, since this is the formalism we will use in following sections. One can rephrase the definition as follows: $\sR$ is q.d.f.\ if and only if, for every $a,b\in\sR$ satisfying $a+b=ab$, we have $a+b=ba$\/.
\end{rem}

\begin{lem}\label{l:unitizations}
Let $k$ be a field.
\begin{YCnum}
\item\label{item:FU} If $A$ is a q.d.f. $k$-algebra, then its forced unitization $\fu{A}$ is directly finite.
\item\label{item:qdf-df} If $B$ is a $k$-algebra with an identity element, then $B$ is q.d.f.\ if and only if it is directly finite.
\end{YCnum}
\end{lem}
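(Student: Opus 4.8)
The plan is to prove the two parts in the order stated, since part \ref{item:qdf-df} is easiest and part \ref{item:FU} requires a little bookkeeping about how the unitization interacts with quasi-inverses. Throughout I would work with the identity $\id_{\fu{A}} = \id$ adjoined to $A$, so that elements of $\fu{A}$ are written $\lm\id + a$ with $\lm\in k$ and $a\in A$, and I would keep in mind the basic identity $\id - (a\qm b) = (\id-a)(\id-b)$ which was already recorded in the preliminaries.

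For part \ref{item:qdf-df}, suppose $B$ is unital; I want to show ``directly finite'' and ``q.d.f.'' coincide for $B$. The key observation is the bijection $x \mapsto \id_B - x$ of $B$ onto itself, which turns multiplicative inverses into quasi-inverses: $xy = \id_B$ if and only if $(\id_B - x)\qm(\id_B - y) = \id_B - xy = 0$. So $x$ is left invertible with right inverse $y$ exactly when $\id_B - x$ has right quasi-inverse $\id_B - y$, and symmetrically on the other side. Running this bijection in both directions immediately gives: every left quasi-invertible element of $B$ is right quasi-invertible $\iff$ every left-invertible element of $B$ is right-invertible, which is the claimed equivalence. This part is essentially a one-line translation once the bijection is spelled out.

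For part \ref{item:FU}, assume $A$ is q.d.f.\ and take $x,y\in\fu{A}$ with $xy = \id$; I must show $yx = \id$. Write $x = \lm\id + a$ and $y = \mu\id + b$ with $\lm,\mu\in k$, $a,b\in A$. Comparing scalar parts of $xy=\id$ forces $\lm\mu = 1$, so $\lm,\mu$ are nonzero inverse scalars; rescaling $x$ by $\lm^{-1}$ and $y$ by $\lm$ (which changes neither the hypothesis $xy=\id$ nor the desired conclusion $yx=\id$), I may assume $\lm = \mu = 1$, i.e.\ $x = \id - a'$ and $y = \id - b'$ for suitable $a',b'\in A$ (here I just rename $-a$ as $-a'$, etc.). Then $xy = \id$ reads $(\id-a')(\id-b') = \id$, i.e.\ $a'\qm b' = 0$, so $a'$ is left quasi-invertible in $A$ with right quasi-inverse $b'$. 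Since $A$ is q.d.f., $b'$ is also a left quasi-inverse of $a'$, i.e.\ $b'\qm a' = 0$, which is precisely $(\id-b')(\id-a') = \id$, that is $yx = \id$. Hence $\fu{A}$ is directly finite.

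The only genuinely delicate point is the reduction in part \ref{item:FU} to the case of scalar part $1$: I should check that the rescaling $x \mapsto \lm^{-1} x$, $y \mapsto \lm y$ is legitimate, i.e.\ that $\lm^{-1}x$ and $\lm y$ still lie in $\fu{A}$ (clear, since $\fu{A}$ is a $k$-algebra) and that it preserves both the hypothesis and the conclusion (also clear, since $(\lm^{-1}x)(\lm y) = xy$ and $(\lm y)(\lm^{-1} x) = yx$). After that observation everything is a mechanical unwinding of the definition of $\qm$ via $\id - (a\qm b) = (\id-a)(\id-b)$, with no analytic input needed — the lemma is purely algebraic.
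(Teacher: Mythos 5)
Your proof is correct and follows essentially the same route as the paper: for part (i) the paper's sketch also writes the factors as scalar-plus-$A$ parts, rescales so the scalar part is $1$, and translates $xy=\id$ into $a\qm b=0$ via $\id-(a\qm b)=(\id-a)(\id-b)$; for part (ii) the paper merely says the argument is ``similar,'' and your bijection $x\mapsto\id_B-x$ is exactly the intended filling-in of those omitted details.
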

\begin{proof}[Proof sketch]
If $A$ is q.d.f. and $\fu{a},\fu{b}\in\fu{A}$ satisfy $\fu{a}\fu{b}=\id$, then we must have $\fu{a}=\lm(\id-a)$ and $\fu{b}=\lm^{-1}(\id-b)$ for some $a,b\in A$ and $\lm\in\Cplx$; by rescaling if necessary, we may assume without loss of generality that $\lm=1$. But then we have $a\qm b=\id-\fu{a}\fu{b}=0$; since $A$ is q.d.f.\ this implies that $0=b\qm a =\id-\fu{b}\fu{a}$, so that $\fu{b}\fu{a}=\id$. This completes the proof of part~\ref{item:FU}. The proof of \ref{item:qdf-df} is similar, and we omit the details.
\end{proof}

\begin{rem}
Since a directly finite algebra is q.d.f, and since subalgebras of q.d.f. algebras are q.d.f, (ii) implies that we can reverse the implication in~(i).
\end{rem}

\begin{prop}\label{p:qdf-ideal}
Let $A$ be a Banach algebra and let $J$ be a \emph{right} ideal in $A$ which is dense for the norm topology. Then $J$ is q.d.f.\ if and only if $A$ is.
\end{prop}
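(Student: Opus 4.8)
I would prove the two implications separately; the reverse one is essentially formal, so the content is all in the forward direction $J$ q.d.f.\ $\Rightarrow$ $A$ q.d.f. For the reverse direction, I would simply note that a right ideal is in particular a subalgebra of $A$ (for $j_1,j_2\in J$ one has $j_1j_2\in JA\subseteq J$), and then quote the remark following Lemma~\ref{l:unitizations}, by which every subalgebra of a q.d.f.\ algebra is q.d.f.; neither density nor the ideal structure is needed here.

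For the forward direction, let $a\in A$ have a left quasi-inverse $b\in A$, so that $b\qm a=0$, equivalently $(\id-b)(\id-a)=\id$ in $\fu{A}$; the goal is to produce $a\qm b=0$. The key move---and, I expect, the one easiest to get wrong---is to perturb $b$ rather than $a$. Since $a$ is a right quasi-inverse of $b$, Lemma~\ref{l:stable} says that every element sufficiently close to $b$ still has a right quasi-inverse in $A$; using that $J$ is dense, I would choose such a $b'$ inside $J$, say with $b'\qm a''=0$ for some $a''\in A$. Because $b'\in J$, $J$ is a right ideal, and $a''$ is a right quasi-inverse of $b'$, Lemma~\ref{l:ideal} upgrades this to $a''\in J$. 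The relation $(\id-b')(\id-a'')=\id$ therefore holds in $\fu{J}$; since $J$ is q.d.f., $\fu{J}$ is directly finite (Lemma~\ref{l:unitizations}), so $(\id-a'')(\id-b')=\id$ as well, i.e.\ $\id-b'$ is invertible in $\fu{J}$, and hence---$\fu{J}$ being a unital subalgebra of $\fu{A}$---in $\fu{A}$.

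It remains to transfer invertibility from $\id-b'$ back to $\id-b$, which I would do with the usual perturbation estimate: arrange in addition that $\norm{b-b'}<\norm{\id-a}^{-1}$. From $(\id-b)(\id-a)=\id$ one computes $(\id-b')(\id-a)=\id+(b-b')(\id-a)$, which is invertible in $\fu{A}$ because $\norm{(b-b')(\id-a)}<1$; since $\id-b'$ is invertible, it follows that $\id-a=(\id-b')^{-1}\bigl[(\id-b')(\id-a)\bigr]$ is invertible. Then $(\id-b)(\id-a)=\id$ forces $\id-b=(\id-a)^{-1}$, so $(\id-a)(\id-b)=\id$, that is $a\qm b=0$; thus $b$ is a right quasi-inverse of $a$ and $A$ is q.d.f. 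The one delicate point is the asymmetry at the start: approximating $a$ would only furnish a nearby element with a \emph{left} quasi-inverse, to which Lemma~\ref{l:ideal}---a statement about right ideals and right quasi-inverses---does not apply, whereas routing the argument through $b$ is exactly what makes Lemmas~\ref{l:stable}, \ref{l:ideal} and~\ref{l:unitizations} mesh.
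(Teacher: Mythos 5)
Your proof is correct and follows essentially the same route as the paper's: use Lemma~\ref{l:stable} together with density of $J$ to perturb the element possessing a right quasi-inverse into $J$, pull its quasi-inverse into $J$ with Lemma~\ref{l:ideal}, and then invoke quasi-direct finiteness of $J$. The only difference is the final transfer back to the original pair, where the paper uses the explicit form $b\qm c$ of the quasi-inverse produced in the proof of Lemma~\ref{l:stable} together with associativity of $\qm$, while you rerun a short Neumann-series estimate to see that $\id-a$ is invertible in $\fu{A}$; both versions are sound.
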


\begin{proof}
Sufficiency is obvious, so we need only prove necessity.

Suppose $J$ is q.d.f. Let $a,b\in A$ satisfy $a\qm b=0$. By Lemma~\ref{l:stable} and density of $J$ in $A$, we can find $a'\in J$ which is close to $a$ and which has a right quasi-inverse in $A$, say $b'$; by Lemma~\ref{l:ideal}, $b' \in J$. Therefore, since $J$ is assumed to be q.d.f, $b'\qm a'=0$\/.

Moreover, the proof of Lemma~\ref{l:ideal} shows that we can take $b'$ to be of the form $b\qm c$ for some $c\in A$. Thus $b\qm (c\qm a')=0$, i.e. $b$ has a right quasi-inverse in $A$. Since we initially assumed that $a\qm b=0$, it follows that $b\qm a=0$ as required.
\end{proof}

%\begin{rem}
%In view of the various group algebras that we shall be considering, it is worth noting that the quotient of a q.d.f. Banach algebra by a (closed) ideal not be~q.d.f. One way to see this is to note that any unital $\Cst$-algebra $A$ arises as the quotient of $\ell^1(\Gamma)$, where $\Gamma$ is the unitary group of $A$ equipped with the discrete topology; and the $\ell^1$-convolution algebra of any discrete group is known to be diectly finite.
%\end{rem}

\subsection*{A sufficient criterion for a $\Cst$-algebra to be q.d.f.}
Let $H$ be an infinite-dimensional Hilbert space. While the algebra $\Bdd(H)$ is evidently not directly finite, the closed subalgebra $\Cpct(H)$ is q.d.f. 
We shall see in this section that this is a special case of a more general result for semifinite von Neumann algebras.

\begin{prop}\label{p:qdf-cstar}
Let $A$ be a $\Cst$-algebra and $J$ a dense right ideal in $A$. Suppose that there exists a linear functional $\tau:J \to \Cplx$ with the following properties: (i) $\tau(ab)=\tau(ba)$ for all $a,b\in J$; and (ii) if $c\in J$ and $\tau(c^*c)=0$ then $c=0$\/. Then $A$ is q.d.f. 
\end{prop}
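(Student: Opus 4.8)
The plan is to reduce the problem to the dense right ideal $J$ via Proposition~\ref{p:qdf-ideal}, to notice that the obstruction to right quasi-invertibility is an \emph{idempotent}, to replace that idempotent by a genuine self-adjoint projection lying inside $J$, and finally to annihilate the projection using the faithfulness of $\tau$.

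First I would invoke Proposition~\ref{p:qdf-ideal}: since $J$ is a dense right ideal of $A$, it suffices to show that $J$ is q.d.f. So suppose $a,b\in J$ satisfy $a\qm b=0$, i.e.\ $ab=a+b$; the goal is $b\qm a=0$. Put $f:=b\qm a=a+b-ba$, and rewrite this, using $ab=a+b$, as $f=ab-ba$. In particular $f\in J$, and the trace property~(i) gives $\tau(f)=\tau(ab)-\tau(ba)=0$. Working in $\fu{A}$ one has $\id-f=(\id-b)(\id-a)$ while $(\id-a)(\id-b)=\id-a\qm b=\id$, so $(\id-f)^2=(\id-b)\big[(\id-a)(\id-b)\big](\id-a)=\id-f$; hence $f$ is an idempotent.

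The heart of the argument is to attach to the idempotent $f$ its range projection. Set $z:=\id+(f^*-f)(f-f^*)$; since $f-f^*$ is skew-adjoint we have $z=\id+(f-f^*)^*(f-f^*)\geq\id$, so $z$ is invertible in $\fu{A}$ with $z^{-1}-\id\in A$. A routine (if slightly fiddly) computation using only $f^2=f$ shows that $z$ commutes with $f$ and with $f^*$, that $fz=zf=ff^*f$, and consequently that $p:=ff^*z^{-1}=z^{-1}ff^*$ is a self-adjoint idempotent with $pf=f$ and $fp=p$. Since $J$ is a right ideal and $ff^*\in J$, we get $p\in J$. Then by the trace property~(i), $\tau(p)=\tau(pf)=\tau(fp)=\tau(f)=0$, so $\tau(p^*p)=\tau(p)=0$, and hypothesis~(ii) forces $p=0$. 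Therefore $f=pf=0$, i.e.\ $b\qm a=0$, so $J$ — and hence $A$ — is q.d.f.

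The one step I expect to require genuine work is the verification of the standard identities for the range projection of an idempotent in a $\Cst$-algebra: that $z$ is invertible and central for $\{f,f^*\}$, and that $p$ is a self-adjoint idempotent with $pf=f$ and $fp=p$. The delicate point, beyond that computation, is essentially bookkeeping: one must arrange that $p$ remains inside the right ideal $J$ (so that $\tau$ is defined on it and the trace identity applies), and one must invoke faithfulness only through the positive element $p^*p$, since hypothesis~(ii) is not assuming that $\tau$ is positive. Everything else is formal manipulation with quasi-inverses and the trace identity.
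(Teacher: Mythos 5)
Your proposal is correct and follows essentially the same route as the paper: reduce to $J$ via Proposition~\ref{p:qdf-ideal}, observe that $b\qm a$ is a trace-zero idempotent, pass to its range projection via the standard formula, note the projection stays in the right ideal $J$, and kill it by faithfulness of $\tau$. The only cosmetic difference is that you verify the range-projection identities by direct algebra from $f^2=f$ (and your stated identities $fz=zf=ff^*f$, $pf=f$, $fp=p$ are indeed correct), whereas the paper checks the same folklore formula by viewing the idempotent as a $2\times 2$ operator matrix.
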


In the case where $A$ has an identity element and $J=A$, this result is well-known (and is indeed the basis of Montgomery's proof in \cite{Mon_dirfin} that the complex group algebra of a group is always directly finite). However, since I am unaware of a convenient reference where this case is stated explicitly, and since we are aiming for something slightly more general, we shall give a complete proof. The key observation is the following standard result about $\Cst$-algebras:

\paragraph{Fact.}
If $p$ is an idempotent in a unital $\Cst$-algebra, there exists a hermitian idempotent $e$ in that algebra which satisfies $ep=p$ and $pe=e$.

\medskip This result
seems to be part of the $\Cst$-algebraic folklore. In particular, in several sources it is merely observed, without further proof, that we can take $e$ to be 
\begin{equation}\label{eq:rabbit}
\tag{$\dagger$} e= pp^*(\id+(p-p^*)(p^*-p))^{-1}\,.
\end{equation}
This formula makes it clear that $e=fe$, but it is not so transparent that $ep=p$, nor that $e$ is an idempotent. Probably the easiest, if not the quickest, way to verify these properties is to regard $p$ as a projection inside $B(H)$, and hence as a $2\times 2$ operator matrix
$\left(\begin{matrix} I & R \\ 0 & 0 \end{matrix} \right)$
with regard to the decomposition of $H$ as $\ran(p)\oplus \ran(p)^\perp$\/. One now checks that the formula on the right hand side of \eqref{eq:rabbit} comes out to equal
$\left(\begin{matrix} I & 0 \\ 0 & 0 \end{matrix} \right)$,
i.e.~the orthogonal projection of $H$ onto $\ran(p)$; it is then clear that $ep=p$ and $pe=e=e^2$ as claimed.

\begin{proof}[Proof of Proposition~\ref{p:qdf-cstar}]
By Proposition~\ref{p:qdf-ideal}, it suffices to show that $J$ is q.d.f. Let $a,b\in J$ be such that $a\qm b=0$\/. Put $p = b\qm a = ab-ba\in J$\/; clearly $\tau(p)=0$, by the `tracial' property of $\tau$.
Moreover, since
\[ 2p-p^2 = p\qm p = b\qm a\qm b \qm a = b\qm 0 \qm a = p \,,\]
we have $p=p^2$; thus $p$ is an idempotent element of $J$. By the remarks above, there is a
hermitian idempotent $e\in\fu{A}$ such that $pe=e$ and $ep=p$\/; in particular, $e\in J$, since $J$ is a \emph{right} ideal in $A$ and hence in~$\fu{A}$. Then, using the tracial property of $\tau$, we have
\[ \tau(e) = \tau(pe) = \tau(ep)=\tau(p)=0\,.\]
But since $e\in J$ and $e=e^*e$ has trace zero, the `faithfulness' of $\tau$ forces $e=0$, hence forces $p=0$. Thus $b\qm a = 0 = a\qm b$ and the proof is complete.
\end{proof}

% In practice, we find such $\tau$ in a slightly different way.
Let $A$ be a $\Cst$-algebra and $A^+$ its cone of positive elements. By a \dt{trace on $A^+$}, we mean a function $\tau: A^+\to [0,\infty]$ which is $\Real_+$-linear and satisfies $\tau(a^*a)=\tau(aa^*)$ for all $a\in A$.
We say that $\tau$ is \dt{faithful} if $\tau(x)>0$ for all $x\in A^+\setminus\{0\}$.

Given a trace on $A^+$, standard procedures from $\Cst$-algebra theory ensure that there is a $2$-sided $*$-ideal $\fm_\tau \subset A$, whose positive cone coincides with $\fm_\tau\cap A^+$, and a linear tracial functional $\fm_\tau\to \Cplx$ which coincides with $\tau$ on $\fm_\tau\cap A^+$; by abuse of notation, we will denote this functional also by~$\tau$.
Moreover, the set $\fn_\tau\defeq \{x \in A : \tau(xx^*) <\infty\}$ has the same norm-closure in $A$ as $\fm_\tau$ does.
See \cite[Proposition 6.1.2]{Dix_Cstar_en} for the details.
It is then clear that Proposition~\ref{p:qdf-cstar} has the following consequence.

\begin{cor}\label{c:qdf-via-trace}
Let $A$ be a $\Cst$-algebra and $\tau:A^+\to[0,\infty]$ a faithful trace. Then the norm closure of $\fn_\tau$ in $A$ is~q.d.f.
\end{cor}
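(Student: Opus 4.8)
The plan is to deduce Corollary~\ref{c:qdf-via-trace} directly from Proposition~\ref{p:qdf-cstar} by setting up the right dense ideal and the right tracial functional, and then invoking the cited structure theory for traces on $\Cst$-algebras. Concretely, given the faithful trace $\tau:A^+\to[0,\infty]$, I would let $J$ be the norm closure of $\fn_\tau$ in $A$; by the quoted facts (\cite[Proposition~6.1.2]{Dix_Cstar_en}) this coincides with the norm closure of the two-sided $*$-ideal $\fm_\tau$, so $J$ is itself a closed two-sided $*$-ideal of $A$, and in particular a right ideal in the $\Cst$-algebra $J$ — which is the algebra we actually want to prove is q.d.f. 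The subtlety is that Proposition~\ref{p:qdf-cstar} wants a \emph{dense} right ideal carrying the tracial functional inside a fixed $\Cst$-algebra; so the natural move is to apply that proposition with the ambient $\Cst$-algebra taken to be $J$ itself and the dense right ideal taken to be $\fm_\tau$ (equivalently $\fm_\tau\cap J$, but $\fm_\tau\subseteq J$ already), which is norm-dense in $J$ by construction.

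With that identification in place, I would verify the two hypotheses of Proposition~\ref{p:qdf-cstar} for the pair $(J,\fm_\tau)$. The linear functional is the extension of $\tau$ to $\fm_\tau$ furnished by the structure theory, again denoted $\tau$. Property (i), that $\tau(xy)=\tau(yx)$ for $x,y\in\fm_\tau$, is exactly the statement that this extended functional is tracial, which is part of what \cite[Proposition~6.1.2]{Dix_Cstar_en} provides (the polarization-type identity reducing the tracial identity on a $*$-ideal to the defining condition $\tau(a^*a)=\tau(aa^*)$ on $A^+$ is standard and I would not reproduce it). Property (ii), faithfulness — if $c\in\fm_\tau$ and $\tau(c^*c)=0$ then $c=0$ — follows because $c^*c\in\fm_\tau\cap A^+$, so $\tau(c^*c)$ is computed by the original trace on $A^+$, which is faithful by hypothesis; hence $c^*c=0$ and so $c=0$ by the $\Cst$-identity $\norm{c}^2=\norm{c^*c}$.

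Having checked both hypotheses, Proposition~\ref{p:qdf-cstar} applied to the $\Cst$-algebra $J$ with its dense right ideal $\fm_\tau$ yields that $J$ is q.d.f., which is precisely the assertion of the corollary. I do not expect any genuine obstacle here; the only thing to be careful about is the bookkeeping of \emph{which} $\Cst$-algebra plays the role of ``$A$'' in Proposition~\ref{p:qdf-cstar} — one must not try to use the original $A$, since $\fm_\tau$ need not be dense in $A$ (indeed $\tau$ may be far from densely defined, as with the standard trace on $\Cpct(H)\subseteq\Bdd(H)$) — together with keeping straight that $\fn_\tau$ is defined via $\tau(xx^*)<\infty$ whereas the ideal one works inside is $\fm_\tau$, the two having the same closure. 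Everything else is a direct appeal to results already in hand.
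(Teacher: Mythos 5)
Your argument is correct and is exactly the route the paper intends: the paper gives no separate proof, simply observing that the corollary is ``clear'' from Proposition~\ref{p:qdf-cstar} together with the quoted facts about $\fm_\tau$ and $\fn_\tau$, and your write-up supplies precisely those details (applying the proposition with the ambient $\Cst$-algebra taken to be the closure of $\fn_\tau$ and the dense ideal taken to be $\fm_\tau$, with faithfulness checked via $c^*c\in\fm_\tau\cap A^+$). Your cautionary remark about not taking the ambient algebra to be $A$ itself is well placed, since $\fm_\tau$ need not be dense in $A$.
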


\bigskip

\begin{rem}
A corollary of Proposition~\ref{p:qdf-cstar} is that continuous-trace $\Cst$-algebras (\cite[Ch.~4, \S5]{Dix_Cstar_en}) are~q.d.f. In this context it is natural to wonder which Type I $\Cst$-algebras are q.d.f. On the one hand, all CCR algebras are q.d.f (as remarked in \cite{YC_surjunc}), but that there are natural examples of GCR algebras which are not q.d.f, such as the Toeplitz algebra~$\cT$.
\end{rem}
\end{section}

\begin{section}{Applications to group $\Cst$-algebras}
The three most obvious $\Cst$-algebras associated to a locally compact group $G$ are the reduced group $\Cst$-algebra, the full group $\Cst$-algebra, and the group von Neumann algebra $\VN(G)$.
%Intuitively, $\VN(G)$ seems the largest of the three, and consequently will be q.d.f.\ for fewer $G$. This will be made precise below. 
It is in fact well known when $\VN(G)$ is directly finite: we give the characterization for sake of completeness and for background context.
We shall then turn to examples where $\Cst_r(G)$ is q.d.f: here our main result is Theorem~\ref{t:unimodular-qdf}, which can be thought of as an extension of the main result from~\cite{Mon_dirfin}.
Lastly we shall make some comments on $\Cst(G)$.
%
% For some classes of $G$ this is well known; but for others we seem compelled to revisit the \emph{proofs} and not merely the \emph{statements} of old results in the literature.

\subsection*{The case of $\VN(G)$}
Being unital, a von Neumann algebra is q.d.f. if and only if it is directly finite; and this in turn happens if and only if it is a \dt{finite von Neumann algebra}, that is, one in which the identity is a finite projection.
(Briefly: if $\cM$ is directly finite, then every isometry in $\cM$ is unitary; this forces $\id[\cM]$ to be a finite projection. Conversely, if $\cM$ is finite, then standard von Neumann algebra theory tells us that $\cM$ supports a separating family of faithful finite tracial states; applying Proposition~\ref{p:qdf-cstar} we deduce that $\cM$ is directly finite.)

It is known that we can get summands of any type in the type decomposition of $\VN(G)$ by choosing appropriate locally compact groups $G$: see \cite{Suth_typeVNG} for some of the possibilities.
On the other hand, it has long been known exactly when $\VN(G)$ is a finite von Neumann algebra. To state this characterization we need the following terminology: a topological group $G$ is said to have \dt{small invariant neighbourhoods}, or to be a \dt{\SIN\ group}, if it has a neighbourhood base at the identity consisting of conjugation-invariant neighbourhoods. Any compact, abelian or discrete group has this property, for example.

\begin{thm}
Let $G$ be a locally compact group. Then $\VN(G)$ is a finite von Neumann algebra if and only if $G$ is a \SIN\ group.
\end{thm}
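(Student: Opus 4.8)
The plan is to prove both directions by combining the abstract criterion of Proposition~\ref{p:qdf-cstar} with known structure theory of SIN groups and their von Neumann algebras.

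For the direction ``\SIN\ implies finite'', the strategy is to produce a faithful normal tracial state (or at least a separating family of faithful normal finite traces) on $\VN(G)$, and then invoke Proposition~\ref{p:qdf-cstar} exactly as in the sketch given for the $\VN(G)$ case in the paragraph above the theorem. The key input is the classical fact, going back to work on SIN groups, that a locally compact \SIN\ group is unimodular and that the group algebra $L^1(G)$, or more precisely the convolution algebra $\Cplx[G]$ of compactly supported continuous functions, carries a well-behaved trace: for $f\in C_c(G)$ one sets $\tau(f) = f(e)$ (suitably interpreted, e.g.\ on the dense $*$-subalgebra of $\VN(G)$ generated by such $f$, or via the Plancherel weight). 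The \SIN\ hypothesis is what guarantees this functional is tracial: conjugation-invariance of a neighbourhood base at $e$ forces $\int f(xyx^{-1})\,dx$-type averages to behave well, equivalently makes the modular structure trivial enough that $\tau(f*g) = \tau(g*f)$. One then checks faithfulness of the induced positive functional on $\VN(G)$ (using that the left regular representation is faithful on $L^1(G)$ for \SIN, indeed any, groups) and applies Corollary~\ref{c:qdf-via-trace} or Proposition~\ref{p:qdf-cstar} directly with $J$ the appropriate dense ideal; since $\VN(G)$ is unital, q.d.f.\ is the same as directly finite, hence $\VN(G)$ is finite.

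For the converse, ``finite implies \SIN'', the cleanest route is contrapositive: assuming $G$ is not a \SIN\ group, exhibit a non-unitary isometry in $\VN(G)$, which by the remark in the $\VN(G)$ paragraph prevents $\VN(G)$ from being directly finite, hence from being finite. The standard approach here uses the failure of \SIN\ to find, for every compact identity neighbourhood, a conjugate that escapes it, and to parlay this into a sequence of group elements or an averaging argument producing an element $v\in\VN(G)$ with $v^*v = \id$ but $vv^* \neq \id$; alternatively one can quote the known equivalence that $G$ is \SIN\ iff $L^1(G)$ has a bounded two-sided approximate identity consisting of functions that are central (or iff $\VN(G)$ admits a faithful normal trace), and run the argument at that level. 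Either way this direction is where I would lean most heavily on the literature.

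The main obstacle I anticipate is the careful bookkeeping in the ``\SIN\ implies finite'' direction around \emph{which} dense ideal $J$ to use and verifying the faithfulness condition (ii) of Proposition~\ref{p:qdf-cstar} for the trace arising from $f\mapsto f(e)$: one must pass from the trace on a convolution algebra of continuous functions to a normal trace on the von Neumann algebra and confirm that $\tau(c^*c)=0 \Rightarrow c=0$ on the relevant ideal, which requires knowing the GNS representation attached to this trace is (a multiple of) the regular representation. For the purposes of this note, however, I expect the right move is simply to cite the known characterization from the literature on \SIN\ groups and group von Neumann algebras and present the proof as an assembly of (a) that characterization and (b) the clean self-contained deduction via Proposition~\ref{p:qdf-cstar}, rather than reproving the analytic facts about \SIN\ groups from scratch.
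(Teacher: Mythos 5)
First, note that the paper does not actually prove this theorem: it is quoted as known, with the proof delegated to \cite[Proposition 13.10.5]{Dix_Cstar_en} and to Propositions 3.2 and 4.1 of \cite{Tay_typeVNG}. So your closing suggestion (cite the characterization) is exactly what the paper does, and is a legitimate way to handle it in a note of this kind.

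However, the argument you sketch for the direction ``\SIN\ $\Rightarrow$ finite'' has a genuine gap. The functional $f\mapsto f(e)$ on $C_c(G)$ (equivalently, the Plancherel weight) is tracial as soon as $G$ is \emph{unimodular}; the \SIN\ hypothesis plays no role in its traciality, and the trace it gives is only \emph{semifinite}, not finite. Feeding it into Proposition~\ref{p:qdf-cstar} or Corollary~\ref{c:qdf-via-trace} only yields that the \emph{norm closure} of $\fn_\tau$ is q.d.f., and that closure is $\Cst_r(G)$, not $\VN(G)$ (indeed $\fn_\tau$ is not norm-dense in $\VN(G)$, so the hypotheses of Proposition~\ref{p:qdf-cstar} fail for $A=\VN(G)$ with this $J$). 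In other words, your route reproves Theorem~\ref{t:unimodular-qdf}, but it cannot give finiteness of $\VN(G)$: the paper's own example $\SL(2,\Real)$ is unimodular yet $\VN(\SL(2,\Real))$ is not finite, so any argument that uses only unimodularity of $G$ must be wrong. What \SIN\ genuinely provides, and what is missing from your sketch, is a base of compact conjugation-invariant neighbourhoods $V$ whose normalized characteristic functions $\abs{V}^{-1/2}\chi_V\in L^2(G)$ are trace vectors; the associated vector states are normal \emph{tracial states} on $\VN(G)$, and as $V$ shrinks they form a separating family, which is what forces $\id$ to be a finite projection (and is exactly the kind of input the paper's parenthetical sketch for finite $\cM$ relies on). For the converse, producing a non-unitary isometry in $\VN(G)$ directly from the failure of \SIN\ is not a routine construction; the cited sources instead run the argument through the (non)existence of sufficiently many normal tracial states and the relation between trace vectors and invariant neighbourhoods, so here too the honest move is the citation rather than the sketched isometry argument.
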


This result is essentially \cite[Proposition 13.10.5]{Dix_Cstar_en}. (To be precise: the result there is stated under the additional assumption that $G$ is unimodular, but inspection of the argument shows this extra condition to be unnecessary. For a less compressed account, see e.g.~Propositions 3.2 and~4.1 of \cite{Tay_typeVNG} and the surrounding remarks.)

In particular, since $\SL(2,\Real)$ is known to be non-\SIN, we deduce that $\VN(\SL(2,\Real))$ is not q.d.f. (Alternatively, we could have appealed to known structure theory for $\VN(\SL(2,\Real))$, which tells us that it has a unital subalgebra isomorphic to~$\Bdd(\ell^2)$.)

\subsection*{Results for $\Cst_r(G)$}
The simplest case to consider is when $G$ is unimodular (thus including all \SIN\ groups, but also those connected Lie groups which are semisimple or nilpotent).

We recall the definition of the \dt{Plancherel weight} on $\VN(G)$.
Let $\cF$ be the family of all compact, finite-measure neighbourhoods of the origin in $G$, and define $\tau: \VN(G)^+\to [0,\infty]$ by
\begin{equation}
\tau(a) \defeq \sup_{K\in\cF} \abs{K}^{-2} \pair{ a(\chi_K)}{\chi_K}
\end{equation}
Clearly $\tau$ is faithful, and it is finite-valued on $C_c(G)^+$.
Moreover, an easy calculation shows that if $G$ is unimodular then $\tau$ is a trace on $\VN(G)^+$. Since $C_c(G)\subseteq \fn_\tau$ and $\fm_\tau$ shares the same norm closure as $\fn_\tau$,
applying Corollary~\ref{c:qdf-via-trace} yields the following result.

\begin{thm}\label{t:unimodular-qdf}
Let $G$ be a unimodular, locally compact group. Then $\Cst_r(G)$ is q.d.f.
\end{thm}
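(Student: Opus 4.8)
The plan is to obtain Theorem~\ref{t:unimodular-qdf} as a direct application of Corollary~\ref{c:qdf-via-trace} to the Plancherel weight $\tau$ on $\VN(G)^+$. Concretely, I would verify three things: that $\tau$ is a well-defined faithful trace on $\VN(G)^+$ when $G$ is unimodular; that $C_c(G)$, and hence $\Cst_r(G)$ by density, is contained in the norm closure of $\fn_\tau$ inside $\VN(G)$; and that this norm closure, when intersected appropriately, recovers $\Cst_r(G)$. Granting these, Corollary~\ref{c:qdf-via-trace} says the norm closure of $\fn_\tau$ in $\VN(G)$ is q.d.f., and since $\Cst_r(G)$ sits inside that closure as a closed subalgebra — and subalgebras of q.d.f. algebras are q.d.f. — we are done.

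First I would recall that elements of $C_c(G)$, viewed via left convolution as operators in $\VN(G)$, are square-integrable in the sense that the vector state computations defining $\tau$ on $a^*a$ are finite; this is the standard fact that $C_c(G) \subseteq \fn_\tau$, which is already asserted in the excerpt just before the statement. Next I would confirm the tracial identity $\tau(a^*a) = \tau(aa^*)$ for $a$ in the relevant domain, which is exactly the ``easy calculation'' flagged in the text and is where unimodularity enters: the modular function $\Delta$ appears when one commutes left and right convolutions past the $L^2$-inner product, and $\Delta \equiv 1$ is precisely what makes the two sides agree. Faithfulness of $\tau$ is immediate from its definition as a supremum of genuinely positive vector-functional values over the net $\cF$ of compact finite-measure neighbourhoods of the identity, together with the fact that a nonzero positive operator $a \in \VN(G)^+$ cannot annihilate every $\chi_K$. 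Then I would invoke the structure theory summarized in the excerpt (following \cite[Proposition 6.1.2]{Dix_Cstar_en}): the trace $\tau$ on $\VN(G)^+$ yields a two-sided $*$-ideal $\fm_\tau$ with a linear tracial functional, and $\fn_\tau = \{x : \tau(xx^*) < \infty\}$ has the same norm closure in $\VN(G)$ as $\fm_\tau$. Since $C_c(G) \subseteq \fn_\tau$ and $C_c(G)$ is norm-dense in $\Cst_r(G)$, the norm closure of $\fn_\tau$ contains $\Cst_r(G)$; in fact it equals $\Cst_r(G)$, because $\fn_\tau$ is by construction a subset of $\VN(G)$ consisting of operators that are limits of convolution operators, but for the q.d.f. conclusion I only need the containment $\Cst_r(G) \subseteq \overline{\fn_\tau}$.

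The main obstacle I anticipate is none of the algebra but rather the bookkeeping around \emph{which} algebra Corollary~\ref{c:qdf-via-trace} is being applied to. The corollary takes a $\Cst$-algebra $A$ and a faithful trace on $A^+$ and concludes that the norm closure of $\fn_\tau$ \emph{in $A$} is q.d.f. Here $\tau$ lives naturally on the von Neumann algebra $\VN(G)^+$, which is unital and not the algebra we care about, so I must be slightly careful: either apply the corollary with $A = \VN(G)$ and then extract $\Cst_r(G)$ as a closed subalgebra of the resulting q.d.f. algebra, or observe that the restriction of $\tau$ to $\Cst_r(G)^+$ is still a faithful trace (faithfulness is inherited, and the tracial property is a pointwise identity) and apply the corollary directly to $A = \Cst_r(G)$, noting that $\overline{\fn_\tau \cap \Cst_r(G)}$ is all of $\Cst_r(G)$ since it already contains the dense subalgebra $C_c(G)$. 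The second route is cleaner and is the one I would take. Once that is set up, the proof is essentially a one-line deduction, and the only genuine computation — the tracial identity that forces unimodularity — is the routine change-of-variables already advertised as ``easy.''
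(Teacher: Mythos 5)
Your proposal is correct and takes essentially the same route as the paper: the Plancherel weight on $\VN(G)^+$ is faithful, is tracial precisely because $G$ is unimodular, satisfies $C_c(G)\subseteq\fn_\tau$, and Corollary~\ref{c:qdf-via-trace} --- applied, as you prefer, to $A=\Cst_r(G)$ with the restricted trace (or to $\VN(G)$ together with the remark that subalgebras of q.d.f.\ algebras are q.d.f.) --- yields the theorem. The only quibble is your unneeded aside that the norm closure of $\fn_\tau$ in $\VN(G)$ equals $\Cst_r(G)$, which is false in general (for $G=\Real$ that closure is strictly larger than $C_0(\Real)$), but as you yourself note, only the containment is used.
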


By Theorem~\ref{t:surjunc} and the remarks which follow it, this gives us several examples of natural group representations $\theta:G\to\Bdd(X)$ where the algebra $\theta(L^1(G))$ is surjunctive.

\begin{rem}
Since semisimple Lie groups are unimodular, Theorem~\ref{t:unimodular-qdf} provides another proof of \cite[Theorem~3.4]{YC_surjunc}.
\end{rem}

\begin{rem}
It is unclear whether having a dense q.d.f.\ *-subalgebra is always sufficient for a $\Cst$-algebra to be q.d.f.
% For instance, even in the special case of a \emph{discrete group}~$\Gamma$, I do not know of any way in which direct finiteness of $\Cst_r(\Gamma)$ can be deduced merely from an assumption that the convolution algebra $c_{00}(\Gamma)$ is directly finite.
\end{rem}

\medskip
Although the Plancherel weight on $\VN(G)^+$ is tracial if and only if $G$ is unimodular, there are non-unimodular groups $G$ for which $\VN(G)$ is a semifinite von Neumann algebra (hence supports some \emph{other} faithful normal semifinite trace). This happens, for instance, if $G$ is connected and separable (see~\cite{Dix_semifin,Puk_fixDix} for the details, and also \cite[pp.~248--249]{Suth_typeVNG} for a more general perspective). 
In such situations, it is tempting to hope that this trace $\tau$ on $\VN(G)^+$ can be chosen such that $\fm_\tau\cap\Cst_r(G)$ is dense in $\Cst_r(G)$.

However, this does not work: we can find rather basic examples of connected, solvable Lie groups
whose (reduced) group $\Cst$-algebras are not q.d.f.
%It is easily checked that `evaluation at the identity' will in general fail to give us a trace once there is a non-trivial modular function on our group. 
This seems to be well-known to specialists: a particularly simple and explicit example is discussed by Rosenberg in~\cite{Ros_Pac76}. Since the emphasis in that paper is rather different from the subject of this article, we shall for sake of convenience outline the salient facts.
Let $G$ be the ``complex $ax+b$ group''
\[ G = \left\{ \left(\begin{matrix} a & b \\ 0 & 1 \end{matrix}\right) \;:\; a\in\Cplx^*, b\in\Cplx\right\}. \]
Up to unitary equivalence, $G$~has precisely one infinite-dimensional unitary representation, which we denote by $\sigma$; this representation is faithful in the sense that $\sigma: \Cst(G)\to\Bdd(H_\sigma)$ is injective. (See the remarks on \cite[p.~177]{Ros_Pac76}.)

We then have the following result, whose proof can be found in that of 
\cite[Proposition~1]{Ros_Pac76}.

\begin{prop}\label{p:countereg}
There exists $\varphi\in L^1(G)$ such that $I+\sigma(\varphi)$ is injective with closed, proper range. (In fact it has a one-dimensional cokernel.)
\end{prop}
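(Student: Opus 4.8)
The plan is to realize the "complex $ax+b$ group" $G$ as a semidirect product $\Cplx \rtimes \Cplx^*$, with $\Cplx$ the normal (vector) subgroup and $\Cplx^*$ acting by multiplication, and to build the sought-after $\varphi$ by transferring a multiplication operator on $\widehat{\Cplx} \cong \Cplx$ through the Mackey/Fourier picture of $\sigma$. Concretely, $\sigma$ is the unique (up to equivalence) infinite-dimensional irreducible unitary representation of $G$, and it may be modelled on $H_\sigma = L^2(\Cplx^*, d^\times z)$ (or equivalently on $L^2$ of an orbit in $\widehat{\Cplx}\cong\Cplx$) in such a way that the normal subgroup $\Cplx$ acts by the character $z \mapsto e^{i\operatorname{Re}(\overline{w}z)}$ at the point $w$ of the orbit, and $\Cplx^*$ acts by (a modulus-corrected) translation of the orbit variable. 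Under this model, for $f\in L^1(\Cplx)\subseteq L^1(G)$ the operator $\sigma(f)$ becomes multiplication by $\widehat f$, where $\widehat f$ is (a suitable normalization of) the Fourier transform of $f$ on the vector group $\Cplx$.

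The key steps, in order, would be: (1) fix the above concrete model of $\sigma$, citing the relevant computation in \cite{Ros_Pac76} (it is in the proof of \cite[Proposition~1]{Ros_Pac76}); (2) observe that the orbit of $\Cplx^*$ on $\widehat{\Cplx}\setminus\{0\}\cong\Cplx^*$ is all of $\Cplx^*$, so that the support of the relevant multiplication operators runs over $\Cplx^*$; (3) choose $g\in L^1(\Cplx)$ whose Fourier transform $\widehat g$ is a continuous function on $\Cplx^*$ with $\widehat g(z) \neq -1$ for all $z\neq 0$ but with $\widehat g(z) \to -1$ as $z\to 0$ — for instance take $\widehat g$ to be, up to a cutoff making it lie in the Fourier algebra, a function like $-1 + \lvert z\rvert/(1+\lvert z\rvert)$ near the origin; such a $g$ exists because, $\Cplx$ being abelian and second countable, $A(\Cplx) = \cF(L^1(\Cplx))$ contains plenty of functions with prescribed boundary behaviour at $0$; (4) set $\varphi$ to be the element of $L^1(G)$ corresponding to $g$ (possibly after convolving with an approximate identity supported near the identity of $\Cplx^*$, to stay in $L^1(G)$ rather than merely in a multiplier algebra), so that $I+\sigma(\varphi)$ is the multiplication operator by $1 + \widehat g$ on $H_\sigma$; (5) conclude: $1+\widehat g$ is nowhere zero on $\Cplx^*$, so $I+\sigma(\varphi)$ is injective and has dense range; but $1+\widehat g$ is \emph{not} bounded below, since it tends to $0$ at the origin, so the range is not closed unless one is more careful — so in fact one wants $1+\widehat g$ to vanish to exactly first order at the single point $0$ (or on a single orbit), giving closed proper range with one-dimensional cokernel, which is the refinement Rosenberg records.

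The main obstacle is step (4)–(5): getting the range to be \emph{closed} and \emph{proper} with precisely one-dimensional cokernel, rather than merely dense-but-not-closed. A multiplication operator by a continuous function $h$ on $L^2$ of a measure space has closed range exactly when $h$ is bounded away from $0$ off its zero set \emph{and} that zero set has an appropriate structure; to land a one-dimensional cokernel one needs the zero set of $1+\widehat g$ to be a single point of the orbit carrying an atom, or — in the continuous model — one must instead use the structure of $\sigma$ as induced from a character of $\Cplx$ and exploit that the "missing" direction corresponds to the unique fixed point $0\in\widehat{\Cplx}$ of the $\Cplx^*$-action, which is not in the orbit supporting $\sigma$ but sits on its boundary. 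This is precisely the subtlety that makes $\Cst_r(G)$ fail to be q.d.f., and it is why the honest argument is the one in \cite[Proposition~1]{Ros_Pac76}; accordingly, for this note I would not reproduce that analysis in full but would content myself with the model of $\sigma$ in step (1)–(2), the explicit choice of symbol in step (3), and a pointer to \cite{Ros_Pac76} for the Fredholm bookkeeping in steps (4)–(5).
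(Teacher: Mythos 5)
There is a genuine gap, and it lies exactly where you place the weight of your construction. The paper itself offers no independent argument: it simply points to the proof of \cite[Proposition~1]{Ros_Pac76}, so the only question is whether your sketched mechanism could be completed into a proof. It cannot. In your model, an element $\varphi$ whose action is multiplication by the scalar symbol $1+\widehat g$ on $H_\sigma\cong L^2(\Cplx^*)$ gives a \emph{normal} operator, and a normal operator that is injective automatically has dense range (its adjoint is multiplication by the conjugate symbol, hence also injective). So no choice of $\widehat g$ --- first-order vanishing at $0$ or otherwise --- can produce closed proper range, let alone a one-dimensional cokernel: if $1+\widehat g$ is nonvanishing on $\Cplx^*$ but tends to $0$ at the origin, the range is dense and not closed, i.e.\ $-1$ lands in the continuous spectrum rather than the residual spectrum. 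Your step (5) concedes the difficulty and then asserts that prescribing the order of vanishing repairs it; that assertion is false, and it is the whole content of the proposition that is being skipped. (This is also consonant with the paper's general theme: algebras of ``trace-friendly'' or normal-type elements are exactly the ones that are q.d.f.\ and surjunctive, so any construction confined to the commutative piece coming from the normal subgroup is doomed from the start. There is also the smaller inaccuracy that $L^1(\Cplx)$ is not a subspace of $L^1(G)$, the normal subgroup being a null set in $G$, but that is fixable by smearing in the $\Cplx^*$ direction as you suggest.)

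What Rosenberg's argument actually supplies, and what is missing from your outline, is the non-normal, shift-like structure: one must use the $\Cplx^*$-translation part of the induced representation, not just the multiplication part coming from the normal subgroup, so that $I+\sigma(\varphi)$ is (up to compacts) a Wiener--Hopf/Toeplitz-type operator whose symbol has nontrivial winding, hence is Fredholm of index $-1$ --- injective with closed range of codimension one, like the unilateral shift. Since your steps (3)--(5) are built around a multiplication operator, they do not merely leave ``bookkeeping'' to \cite{Ros_Pac76}; they replace the essential construction with one that provably cannot work. A corrected write-up should either reproduce the index argument or, as the paper does, cite \cite[Proposition~1]{Ros_Pac76} outright without the intermediate (and misleading) multiplication-operator scaffolding.
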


\begin{cor}
$\Cst_r(G)$ is not q.d.f.
\end{cor}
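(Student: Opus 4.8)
The plan is to leverage Proposition~\ref{p:countereg} together with Theorem~\ref{t:surjunc}, used contrapositively. First I would observe that the representation $\sigma$ restricted to $L^1(G)$ extends to $\Cst(G)$, and the question is whether it factors through $\Cst_r(G)$. Since $G$ is amenable (being solvable, indeed connected and solvable), the canonical quotient map $\Cst(G)\to\Cst_r(G)$ is an isomorphism, so $\sigma$ descends to an injective $*$-homomorphism $\Cst_r(G)\to\Bdd(H_\sigma)$; write $\cA\defeq\sigma(\Cst_r(G))$, a closed $*$-subalgebra of $\Bdd(H_\sigma)$ which is $*$-isomorphic (hence bicontinuously isomorphic) to $\Cst_r(G)$.

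Next, suppose for contradiction that $\Cst_r(G)$ \emph{were} q.d.f. Then $\cA$ is bicontinuously isomorphic to a q.d.f.\ $\Cst$-algebra, and Theorem~\ref{t:surjunc} tells us that $\cA$ is surjunctive, i.e.\ every element of $\cA$ has empty residual spectrum. Now Proposition~\ref{p:countereg} supplies $\varphi\in L^1(G)$ such that $I+\sigma(\varphi)$ is injective with closed proper range; equivalently, writing $a=\sigma(\varphi)\in\cA$, the point $\lm=-1$ lies in the spectrum of $a$, the operator $a-(-1)I=a+I$ is injective, and its range is closed and proper. By definition this says $-1$ is in the residual spectrum of $a$, contradicting surjunctivity of $\cA$. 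Hence $\Cst_r(G)$ is not q.d.f.

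A small point to address carefully: Theorem~\ref{t:surjunc} is stated for a closed subalgebra $\cA\subseteq\Bdd(X)$, and here $X=H_\sigma$ is a Hilbert space, so that hypothesis is met; moreover $\cA$ is the image of $\Cst_r(G)$ under the \emph{injective} $*$-homomorphism $\sigma$, and an injective $*$-homomorphism between $\Cst$-algebras is automatically isometric, so $\cA$ is genuinely bicontinuously isomorphic to $\Cst_r(G)$ and is in particular closed. I should also note that $I+\sigma(\varphi)=\sigma(\delta_e+\varphi)$ in the unitization, where $\delta_e+\varphi$ is an element of $\fu{L^1(G)}$, so nothing is lost by working inside $\Bdd(H_\sigma)$ rather than inside $\cA$ itself.

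The only real obstacle is the amenability step: one must be sure that the infinite-dimensional representation $\sigma$ of $\Cst(G)$ actually passes to $\Cst_r(G)$, since Proposition~\ref{p:countereg} is phrased in terms of $\Cst(G)$, whereas the corollary asserts a failure for $\Cst_r(G)$. This is handled by the classical fact that for an amenable locally compact group the full and reduced $\Cst$-algebras coincide; as $G$ is connected and solvable it is amenable, so $\Cst(G)=\Cst_r(G)$ and the issue evaporates. (Alternatively, one could cite that $\sigma$ is weakly contained in the regular representation, which for amenable $G$ is automatic.) Everything else is a direct quotation of results already in hand.
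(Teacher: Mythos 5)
Your proposal is correct and follows essentially the same route as the paper: amenability of $G$ identifies $\Cst(G)$ with $\Cst_r(G)$, faithfulness of $\sigma$ plus Theorem~\ref{t:surjunc} would force surjunctivity of $\sigma(\Cst_r(G))$ if it were q.d.f., and Proposition~\ref{p:countereg} exhibits $-1$ in the residual spectrum of $\sigma(\varphi)$, giving the contradiction. The extra care you take (isometry of injective $*$-homomorphisms, closedness of the image) only makes explicit details the paper leaves implicit.
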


\begin{proof}[Proof of the corollary]
We start by noting that since $G$ is amenable, we can work with the full $\Cst$--algebra instead of the reduced one.

By Proposition~\ref{p:countereg}, $-1$ lies in the residual spectrum of $\sigma(\varphi)$.
Since $\sigma$ is faithful, if $\Cst(G)$ were q.d.f. then $\sigma(\Cst(G))$ would be surjunctive, by Theorem~\ref{t:surjunc}. But this contradicts our initial observation that $\sigma(\varphi)$ has non-empty residual spectrum.
\end{proof}

%\YCnote{Is Rosenberg's calculation enough to show that $L^1$ of this group is not q.d.f.}

Rosenberg's calculations actually went much further, and determined (up to isomorphism) the reduced group \Cst-algebras of certain relatives of the $ax+b$ group. The methods, which use a mix of commutative Fourier analysis with BDF techniques, were extended by Wang in \cite{Wang_pitman199} to the class of groups $G(p,q,\al)$: here $p$, $q$ are positive integers, and $G(p,q,\al)$ is the semidirect product arising from an action of $\Real$ on $\Real^{p+q}$ via
\[ t \mapsto \operatorname{diag}(e^{\al_1 t},\dots, e^{\al_p t},
	e^{-\al_{p+1} t}, \dots, e^{-\al_{p+q} t}) \]
where $\al_1,\dots,\al_{p+q}>0$.
{\it En route}\/ to determining $\Cst(G(p,q,\al))$, the following result is proved.

\begin{thm}[{\cite[Corollary 3.2]{Wang_pitman199}}]
There is an embedding of $\Cst(G(p,q,\al))$ as a subalgebra of $C(S^{p-1}\times S^{q-1}, \Cpct(L^2(\Real)))$.
\end{thm}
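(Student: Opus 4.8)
The result describes structural information about $C^*(G(p,q,\alpha))$ — an embedding into a continuous-function algebra with compact-operator fibres. The plan is to run the standard Mackey-machine / orbit-space analysis for the semidirect product $G(p,q,\alpha) = \Real \ltimes \Real^{p+q}$, where $\Real$ acts diagonally by the given exponential weights.

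First I would identify the unitary dual of the abelian normal subgroup $N = \Real^{p+q}$ with its Pontryagin dual $\widehat{N} \cong \Real^{p+q}$, and compute the dual action of $\Real$: it is again diagonal, scaling the first $p$ coordinates by $e^{-\alpha_i t}$ and the last $q$ by $e^{\alpha_{p+j} t}$. The $\Real$-orbits in $\widehat{N}$ then fall into a small number of types according to which coordinate blocks vanish; the crucial observation is that the orbit space is not Hausdorff, but the ``generic'' orbits — those with at least one nonzero coordinate in each block — sweep out, after quotienting, a space parametrized by $S^{p-1} \times S^{q-1}$ (the directions of the two coordinate blocks), since along a generic orbit the radial parameters are absorbed by the flow. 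Stabilizers of generic characters are trivial, so by Mackey's theory the corresponding irreducible representations of $G(p,q,\alpha)$ are induced from these characters and act irreducibly on $L^2(\Real)$ (the $L^2$-space of the one-parameter orbit), giving the $\Cpct(L^2(\Real))$ fibres.

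Next, I would assemble these fibrewise pictures into a single $*$-homomorphism. Restricting to the generic part of the dual, integration of the regular-type representations over $S^{p-1}\times S^{q-1}$ produces a $*$-homomorphism $C^*(G(p,q,\alpha)) \to C(S^{p-1}\times S^{q-1}, \Cpct(L^2(\Real)))$; one must check it is norm-decreasing (automatic for $*$-homomorphisms of $C^*$-algebras) and, more substantially, \emph{injective}. Injectivity amounts to showing that the generic representations separate points of $C^*(G(p,q,\alpha))$, equivalently that the kernel — the ideal corresponding to the lower-dimensional (non-generic) strata of the orbit space — is trivial. This is the step where the specific weights $\alpha_i > 0$ matter: the positivity hypothesis guarantees that every non-generic character lies in the closure of the generic orbits, so the generic part is dense in the spectrum and the associated ideal is all of $C^*(G)$; hence the map is faithful.

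The main obstacle I expect is precisely this faithfulness/denseness argument together with the verification that the induced representations depend \emph{continuously} on the $S^{p-1}\times S^{q-1}$ parameter in the norm of $C(S^{p-1}\times S^{q-1}, \Cpct(L^2(\Real)))$ — continuity of a field of induced representations over a non-Hausdorff base requires care in choosing trivializations of the orbit data, and one has to rule out collapse or discontinuity at the ``poles'' where the radial parameter degenerates. Everything else (the orbit computation, triviality of generic stabilizers, irreducibility of the induced representations) is a routine application of the Mackey machine for regular semidirect products. Since the excerpt only states the theorem and cites Wang for the proof, I would in practice defer to \cite[Corollary 3.2]{Wang_pitman199} for the details and merely indicate this outline.
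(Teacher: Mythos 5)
The paper itself offers no proof here --- the statement is quoted verbatim from Wang's monograph --- so deferring to \cite[Corollary 3.2]{Wang_pitman199} is reasonable; but the outline you give in place of that citation contains a concrete error at its centre. The generic stratum $(\Real^p\setminus\{0\})\times(\Real^q\setminus\{0\})$ of $\widehat{N}$ is $(p+q)$-dimensional and the flow is only one-parameter, so the generic orbit space is $(p+q-1)$-dimensional: it is (for equal weights, say) $S^{p-1}\times S^{q-1}\times(0,\infty)$, a weighted product of the two radii being a surviving invariant. The flow absorbs \emph{one} radial parameter, not both, contrary to your claim that ``the radial parameters are absorbed by the flow''. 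Consequently there is no canonical generic irreducible representation attached to a point of $S^{p-1}\times S^{q-1}$; you must choose a cross-section of the $(0,\infty)$-parameter, and with any such choice the resulting $*$-homomorphism into $C(S^{p-1}\times S^{q-1},\Cpct(L^2(\Real)))$ is \emph{not} injective: its kernel contains the part of the generic ideal $C_0(S^{p-1}\times S^{q-1}\times(0,\infty))\otimes\Cpct$ sitting over the complement of the section. Your faithfulness argument also conflates two different things: density of the generic stratum in $\widehat{N}$ gives that the corresponding ideal is \emph{essential}, not that it ``is all of $\Cst(G)$'', and in any case essentiality only yields faithfulness if you keep the whole $(0,\infty)$-family of generic representations, which your target space does not accommodate.

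The step you wave off as routine is in fact where the substance lies. The non-generic orbits with $\xi''=0,\ \xi'\neq 0$ (and symmetrically) are not closed, and the associated induced irreducible representations have non-compact, Wiener--Hopf/Toeplitz-type image: indeed $G(p,q,\al)$ surjects onto the generalized $ax+b$ group $\Real\ltimes\Real^p$, which is not CCR. Since every $\Cst$-subalgebra of $C(X,\Cpct(H))$ is liminal, no argument consisting only of ``Mackey machine plus density of the generic orbits'' can yield an embedding of the stated kind; whatever precise form Wang's Corollary 3.2 takes, it rests on the operator-field analysis of Rosenberg and Wang (commutative Fourier analysis along the dual orbits combined with BDF-type control of the degenerate boundary fibres, where units/multipliers of the fibres must be handled with care), and this is precisely why the paper cites the result rather than sketching a proof. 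So your plan, as written, would not establish the theorem: the parametrization of the generic dual is wrong, the injectivity argument fails, and the genuinely hard boundary analysis is missing.
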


\begin{cor}
$\Cst(G(p,q,\al))$ is q.d.f.
\end{cor}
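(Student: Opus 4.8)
The plan is to deduce this from the embedding theorem together with Proposition~\ref{p:qdf-cstar} (or rather its corollary, Corollary~\ref{c:qdf-via-trace}). Since subalgebras of q.d.f.\ algebras are q.d.f., it suffices to show that $C(S^{p-1}\times S^{q-1}, \Cpct(L^2(\Real)))$ is q.d.f. Write $X = S^{p-1}\times S^{q-1}$, a compact Hausdorff space, and $\Cpct = \Cpct(L^2(\Real))$; the algebra in question is then the $\Cst$-algebra $C(X,\Cpct)$ of continuous $\Cpct$-valued functions on $X$.

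First I would exhibit a faithful trace $\tau: C(X,\Cpct)^+ \to [0,\infty]$ as follows. The compact operators carry a faithful semifinite trace, namely the usual operator trace $\operatorname{Tr}$, finite-valued and faithful on the positive trace-class operators. For $f \in C(X,\Cpct)^+$, set
\[ \tau(f) \defeq \int_X \operatorname{Tr}(f(x)) \, d\mu(x), \]
where $\mu$ is (say) the normalized surface measure on $X$, which has full support. The integrand $x \mapsto \operatorname{Tr}(f(x)) \in [0,\infty]$ is lower semicontinuous (being a supremum of the continuous functions $x\mapsto \pair{f(x)\xi_n}{\xi_n}$ over finite orthonormal sets), hence measurable. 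One checks directly that $\tau$ is $\Real_+$-linear and that $\tau(g^*g) = \tau(gg^*)$ for all $g \in C(X,\Cpct)$, using the corresponding tracial identity for $\operatorname{Tr}$ pointwise. Faithfulness is immediate: if $f \in C(X,\Cpct)^+$ is nonzero, then $f(x_0) \neq 0$ for some $x_0$, so $\operatorname{Tr}(f(x_0)) > 0$, and by lower semicontinuity $\operatorname{Tr}(f(x)) > 0$ on a neighbourhood of $x_0$, whence $\tau(f) > 0$ since $\mu$ has full support.

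Next I would observe that $\fn_\tau$ is dense in $C(X,\Cpct)$. Indeed $\fn_\tau$ contains all $f \in C(X,\Cpct)$ whose pointwise ranks are uniformly bounded with pointwise trace-norms bounded on $X$; more concretely, $C(X) \otimes \cF_0$ lies in $\fn_\tau$, where $\cF_0$ denotes the finite-rank operators on $L^2(\Real)$, and this algebraic tensor product is norm-dense in $C(X,\Cpct)$ because $\cF_0$ is dense in $\Cpct$ and $X$ is compact. Then Corollary~\ref{c:qdf-via-trace} applies: the norm closure of $\fn_\tau$, which is all of $C(X,\Cpct)$, is q.d.f. Finally, by the embedding theorem $\Cst(G(p,q,\al))$ is (isomorphic to) a $\Cst$-subalgebra of $C(X,\Cpct)$, and a subalgebra of a q.d.f.\ algebra is q.d.f., so $\Cst(G(p,q,\al))$ is q.d.f.; since $G(p,q,\al)$ is amenable (being solvable), $\Cst_r(G(p,q,\al)) = \Cst(G(p,q,\al))$, completing the argument.

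The only mildly delicate point is the measurability and lower semicontinuity of $x\mapsto\operatorname{Tr}(f(x))$ and the verification that $\tau$ deserves to be called a trace in the precise sense demanded by Corollary~\ref{c:qdf-via-trace} (i.e.\ that the associated ideal $\fm_\tau$ and functional behave as expected); but this is routine once one notes that $\tau$ is nothing but the trace obtained by integrating the fibrewise operator trace, and in fact $C(X,\Cpct)$ is itself a continuous-trace $\Cst$-algebra, so the remark following Corollary~\ref{c:qdf-via-trace} already yields the conclusion. I would therefore present the proof briefly, simply citing that continuous-trace $\Cst$-algebras are q.d.f.\ and that $C(X,\Cpct)$ is of this form, and then invoking stability under passage to subalgebras and amenability of $G(p,q,\al)$.
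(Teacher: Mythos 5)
Your argument is correct and is essentially the proof the paper intends (the corollary is stated without proof precisely because $C(S^{p-1}\times S^{q-1},\Cpct(L^2(\Real)))$ is q.d.f.\ --- via the trace machinery of Corollary~\ref{c:qdf-via-trace}, equivalently the remark that continuous-trace/CCR algebras are q.d.f.\ --- and q.d.f.\ passes to subalgebras, so the embedding theorem finishes it). Your explicit construction of the integrated fibrewise trace just fills in the routine details, and the closing appeal to amenability is harmless but unnecessary since the statement already concerns the full $\Cst$-algebra.
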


\begin{rem}
An alternative proof that $\Cst(G(p.q,\al))$ is q.d.f.\ goes as follows. First, note that the isomorphism class of $\Cst(G(p,q,\al))$ depends only on $p$ and $q$ and not on the lengths of the `roots' $\al_1,\dots,\al_{p+q}$. (See \cite[pp.~12--13]{Wang_pitman199} and \cite[pp.~190--191]{Ros_Pac76}.)
Secondly, note that if we choose the $\al_i$ so that $\al_1+\dots+\al_p=\al_{p+1}+\dots+\al_{p+q}$, then for each $x$ in the Lie algebra $\fg$ of $G(p,q,\al)$, the operator $\operatorname{ad}_x:\fg\to\fg$ has trace zero; and this is known (see the remarks on \cite[p.~190]{Ros_Pac76}) to imply that the group is unimodular. Applying Theorem~\ref{t:unimodular-qdf}, we deduce that $\Cst(G(p,q,\al))$ is~q.d.f for this -- and hence any -- choice of the $\alpha_i$.
%
% Adjoint representation of Lie algebra: $[e_{n+1},e_i]=\alpha_i e_i$
\end{rem}

\subsection*{The full group $\Cst$-algebra}
What can be said for the \emph{full} group $\Cst$-algebra of $G$? If $G$ is amenable then $\Cst(G)=\Cst_r(G)$ and we can apply the results of the previous section. If $G$ is non-amenable, then the situation is unclear even for discrete groups.

There exist nonamenable discrete groups $\Gm$ for which $\Cst(\Gm)$ has a faithful tracial state, and hence is q.d.f. by Proposition~\ref{p:qdf-cstar} -- for instance, this happens if $G$ is residually finite. On the other hand, in \cite{BekLou_trace} Bekka and Louvet give examples of discrete groups~$\Gm$ for which $\Cst(\Gm)$ has no faithful trace; but I do not know if such examples fail to be q.d.f.

For certain classes of connected groups, we can do better. It was observed in \cite{YC_surjunc} that if $G$ is a CCR group (for instance, a connected Lie group which is either semisimple, nilpotent, or real algebraic) then $\Cst(G)$ has directly finite unitization, and hence in the language of this article is~q.d.f.
%(While solvable groups are in general not CCR, they are amenable, and so as remarked at the start of this section we won't consider them here.)

\end{section}

\begin{section}{Final remarks}
Many questions remain. We shall close with three in particular.
\begin{enumerate}
\item Does there exist a discrete group $\Gm$ (necessarily non-amenable) such that $\Cst(\Gm)$ is not q.d.f?
\item Let $G$ be a connected Lie group with Lie algebra $\fg$. Are there ``reasonable' characterizations in terms of $\fg$ for when $\Cst_r(G)$ is q.d.f? What if we restrict attention to solvable Lie groups?
\item If $G$ is not \SIN, then as remarked above its group von Neumann algebra cannot be q.d.f. What about its measure algebra~$M(G)$?
\end{enumerate}

\end{section}

\section*{Acknowledgements}
My thanks go to C.~Zwarich for pointing out to me the characterization of when the group von Neumann algebra is finite, 
R,~Archbold for several comments and corrections,
and M.~Daws for useful conversations leading to a more streamlined presentation of Theorem~\ref{t:unimodular-qdf}.
%Any remaining errors are, of course, my own responsibility.

\def\arX#1{{\tt arXiv} #1}
%\bibliography{BAgenref,fouralg,gp-cstar,YCmain,YCpub}
%\bibliographystyle{siam}

\vfill

\noindent%
\begin{tabular}{l}
Y. Choi\\
D\'epartement de math\'ematiques
% \\
% \text{\hspace{1.0em}} 
et de statistique,\\
Pavillon Alexandre-Vachon\\
Universit\'e Laval\\
Qu\'ebec, QC \\
Canada, G1V 0A6 \\
{\bf Email: \tt y.choi.97@cantab.net}
\end{tabular}

\end{document}